\definecolor{mred}{rgb}{.5,.0,.0}
\definecolor{dmagenta}{rgb}{.4,.1,.5}
\definecolor{dblue}{rgb}{.0,.0,.5}
\definecolor{mblue}{rgb}{.0,.0,.7}
\definecolor{ddblue}{rgb}{.0,.0,.4}
\definecolor{dred}{rgb}{.7,.0,.0}
\definecolor{dgreen}{rgb}{.0,.5,.0}
\definecolor{Eeom}{rgb}{.0,.0,.5}
\def\BibTeX{{\rm B\kern-.05em{\sc i\kern-.025em b}\kern-.08em
    T\kern-.1667em\lower.7ex\hbox{E}\kern-.125emX}}
\crefname{section}{Section}{Sections}
\crefname{subsection}{subsection}{subsections}
\crefname{notation}{Notation}{Notations}
\crefname{hypothesis}{Hypothesis}{Conditions}
\crefname{assumption}{Assumption}{Assumptions}
\Crefname{figure}{Figure}{Figures}
\newtheorem{theorem}{Theorem}[section]
\newtheorem{lemma}{Lemma}[section]
\newtheorem{definition}{Definition}[section]
\newtheorem{remark}{Remark}[section]
\newtheorem{example}{Example}[section]
\newcommand{\E}{\mathrm{e}}    
\newcommand{\PA}{\mathcal{P}}
\newcommand{\sG}{\mathscr{G}}
\newcommand{\cA}{\mathcal{A}}
\newcommand{\cI}{\mathcal{I}}
\newcommand{\Act}{\mathcal{U}}
\newcommand{\cM}{\mathcal{M}}
\newcommand{\cQ}{\mathcal{Q}}
\newcommand{\Usm}{\mathfrak{U}_{\mathsf{sm}}}
\newcommand{\Upm}{\mathfrak{U}_{\mathsf{p}}}
\newcommand{\smid}{\,|\,}
\newcommand{\RR}{\mathbb{R}} 
\newcommand{\NN}{\mathbb{N}} 
\newcommand{\Exp}{\mathbb{E}} 
\newcommand{\df}{\coloneqq} 
\title{\LARGE \bf
Linear and Dynamic Programs for Risk-Sensitive\\ Cost Minimization}
\author{Ari~Arapostathis$^{1}$,~\IEEEmembership{Fellow,~IEEE,}
and
Vivek S.~Borkar$^{2}$,~\IEEEmembership{Fellow,~IEEE}
\thanks{$^{1}$A. Arapostathis is with the Electrical and Computer
Engineering Department, 2501 Speedway, EER 7.824,
The University of Texas at Austin, Austin, TX 78712
(e-mail: ari@utexas.edu).}
\thanks{$^{2}$Vivek S.~Borkar is with the Department of Electrical Engineering,
Indian Institute of Technology Bombay,
Powai, Mumbai 400076, India
(e-mail: borkar.vs@gmail.com).}}%
\begin{document}

\maketitle
\thispagestyle{empty}
\begin{abstract}
We derive equivalent linear and dynamic programs for infinite horizon
risk-sensitive  control for minimization of the asymptotic growth rate of
the cumulative cost.
\end{abstract}


%
\IEEEpeerreviewmaketitle

%

\maketitle

\section{Introduction}

Risk-sensitive control problems that seek to minimize over an infinite time
horizon the asymptotic growth rate of mean exponentiated cumulative cost of
a controlled Markov chain were first studied in \cite{Flem1,Flem2}, which also
pioneered the most popular approach to such problems, viz., to use the celebrated
`log-transformation' to convert it to a zero sum stochastic game with long run
average or `ergodic' payoffs. An equivalent alternative approach that treats the corresponding reward maximization problem as a nonlinear eigenvalue problem was
developed in \cite{Ananth}. This leads to an equivalent ergodic reward maximization
problem and an associated linear program. For the finite state-action case,
the complete details of the latter were worked out in \cite{Borkar}.
Unfortunately the techniques therein do not extend to the cost minimization problem,
which is equivalent to a zero sum ergodic stochastic game.
It may be recalled that unlike the classical criteria such as discounted or ergodic,
risk-sensitive reward maximization cannot be converted to a cost minimization and vice versa,
by a simple sign flip. Thus the two are not equivalent.

In this work, we make the key observation that the aforementioned zero sum
ergodic game belongs to a very special subclass thereof, viz., a single controller
game wherein one agent affects only the payoff and not the dynamics.
This case is indeed amenable to a linear programming formulation as pointed out
in \cite{Vrieze}.
We exploit this fact to derive the counterparts of the results of \cite{Borkar}
for the cost minimization problem.
It may be noted that an LP formulation for risk-sensitive 
cost or reward is not a priori obvious because unlike the classical criteria such
as discounted or ergodic, where the uncontrolled problems lead to linear `one step analysis'
(or the Poisson equation),
risk-sensitive control leads to an eigenvalue problem which is already nonlinear.

We introduce the control problem in \cref{S2}. The equivalent single
controller ergodic game and its linear programming formulation is given in \cref{S3}.
\Cref{S4} uses this in turn to derive the corresponding dynamic programming equations
for risk-sensitive control without the assumption of irreducibility.
This leads to a second `dynamic programming' equation coupled to the usual one in
what is a counterpart of the corresponding system of equations for
ergodic control without irreducibility (\cite{Puterman}, Chapter 9).
The interesting twist here is the appearance of the so called `twisted' transition kernel.

\section{Risk-sensitive cost minimization}\label{S2}

Consider a controlled Markov chain $\{X_n\}$ on a finite state space
$S \df \{1,2,\dotsc,s\}$, controlled by a control process $\{Z_n\}$ taking values
in a finite action space $\Act$, with running cost $c(i,u)$, $i \in S$, $u \in \Act$.
Let
\begin{equation*}
(i,u,j) \in S\times \Act\times S \,\mapsto\, p(j\,|\,i,u) \in [0,1]\,,
\end{equation*}
with $\sum_j p(j\,|\,i,u) = 1 \ \forall \ (i,u)\in S\times \Act$ be its controlled
transition kernel, that is, the following `controlled Markov property' holds:
$$P(X_{n+1} = j \,|\, X_m,Z_m, m \leq n) \,=\,
 p(j\,|\,X_n,Z_n)\,,\ \ n \geq 0\,.$$
Such $\{Z_n\}$ will be called admissible controls.
We call $\{Z_n\}$ a stationary (randomized) policy if
$$P(Z_n = u \,|\,X_m, m \leq n;\, Z_m, m < n) \,=\, \varphi(u\,|\,X_n)$$ 
for some $\varphi\colon i \in S \mapsto \varphi( \cdot \smid i) \in \PA(\Act)$,
with $\PA(\Act)$ denoting the simplex of probability vectors on $\Act$.
A stationary policy is called
\emph{pure} or \emph{deterministic} if $Z_n = v(X_n)$ for all $n \ge 0$,
for some $v\colon S \mapsto \Act$, equivalently, when $\varphi( \cdot | i ) = \delta_{v(i)}(\cdot) \ \forall i$, i.e., a Dirac measure at $v(i) \ \forall i$.
We let $\Usm$ and $\Upm$ denote the class of all stationary
and pure policies, respectively.
By abuse of  terminology,  stationary policies, resp.\ pure policies, are identified with the map $\varphi$, resp.\ $v$,
in the preceding definition.

The risk-sensitive cost minimization problem we are interested in seeks  to determine
\begin{align}
\Bar\lambda^* \, &\df \, \max_{i\in S}\, \lambda^*_i\,, \label{cost} \\
\lambda^*_i \, &\df \, \inf_{\{Z_m\}} \,\limsup_{n\uparrow\infty}\,
\frac{1}{n}\log \Exp_i\Bigl[\E^{\sum_{m=0}^{n-1}c(X_m, Z_m)}\Bigr]\,, \label{cost-i}
\end{align}
where the infimum is over all admissible controls,
and $\Exp_i[\,\cdot\,]$ denotes the
expectation with $X_0 = i$.
We restrict ourselves to stationary policies.
For a stationary policy $v\in\Usm$, we use the notation
\begin{equation}\label{E-not1}
\begin{aligned}
c_v(i) &\,\df\, \sum_{u\in \Act} c(i,u) v(u\,|\,i)\,,\\[3pt]
p_v(j\,|\,i) &\,\df\, \sum_{u\in \Act} p(j\,|\,i,u)v(u\,|\,i)\,.
\end{aligned}
\end{equation}
Let
$$\lambda_i^v \,\df\, \lim_{n\uparrow\infty}\,\frac{1}{n}\,
\log \Exp_i^v\Bigl[\E^{\sum_{m=0}^{n-1}c_v(X_m)}\Bigr]\,,$$
where $\Exp_i^ v[ \cdot ]$ indicates the  expectation under
the  policy $v\in\Usm$ with $X_0 = i$. Thus 
\begin{equation}\label{E-min}
\Bar\lambda^* \,=\, \min_{v\in\Usm}\,\max_{i\in S}\,\lambda_i^v\,.
\end{equation}

\begin{definition}\label{D2.1}
Let $\cQ$ denote the class of stochastic matrices $q=[q_{ij}]_{i,j\in S}$
such that $$q_{ij}\,=\,0\quad\text{if\ \ }\max_{u\in\Act}\, p(j\,|\, i,u)\,=\,0\,.$$
Also, $\cM_q$  denotes the set of invariant probability
vectors of $q\in\cQ$.

\smallskip
Using the equivalent notation $q(j\,|\,i)=q_{ij}$, we define
\begin{equation}\label{E-runreward}
\Tilde{c}(i,q,u) \,=\, c(i,u) - D\bigl( q(\cdot\,|\,i) \,\|\, p(\cdot\,|\,i,u)\bigr)\,,
\end{equation}
if $q(\cdot\,|\,i)\ll p(\cdot\,|\,i,u)$,
and $\Tilde{c}(i,q,u)=-\infty$, otherwise.
Here, 
\begin{equation*}
D\bigl( q(\cdot\,|\,i) \,\|\, p(\cdot\,|\,i,u)\bigr)
\,=\,
\sum_{j\in S} q(j\,|\,i)\frac{\log q(j\,|\,i)}{\log p(j\,|\,i,u)}
\end{equation*}
denotes the Kullback-Leibler divergence. For $v\in\Usm$, we let $\Tilde{c}_v(i,q)$ be defined analogously to \cref{E-not1},
that is,
\begin{equation*}
\Tilde{c}_v(i,q) \,\df\, \sum_{u\in \Act} \Tilde{c}(i,q,u)\, v(u\,|\,i)\,.
\end{equation*}
\end{definition}

Specializing  \cite[Theorem~3.3]{Ananth} to the above, we have
\begin{equation}\label{variational}
\max_{i\in S}\lambda_i^v \,=\, \max_{q \in\cQ}\,\max_{\pi \in \cM_q}\,
\sum_{i\in S} \pi(i) \Tilde{c}_v(i,q)\,.
\end{equation}
The reason that we can restrict the maximization to the set $\cQ$ is
the following.
Suppose $(\Hat{q},\Hat{\pi})$ is a pair where the maximum in \cref{variational}
is attained. Without loss of generality we may assume that $\Hat\pi$ is
an ergodic measure. It is clear then that we must have
$\Hat{q}(\cdot\,|\,i)\ll p_v(\cdot\,|\,i)$ on the support of $\Hat\pi$,
otherwise $\max_{i\in S}\lambda_i^v=-\infty$, which is not possible.

\Cref{E-min,variational,E-runreward} suggest
an ergodic game for a controlled Markov chain which we describe next.

\smallskip
\begin{definition}\label{D2.2}
The model for the controlled Markov chain $\{\widetilde{X}_n\}$ is 
as follows:
\begin{itemize}
\item The state space is $S$.

\item The action space is $\cQ(i)\times\Act$, for $i\in S$,
where
\begin{equation}\label{E-Ai}
\cQ(i) \,\df\, \bigl\{q_{ij}\,\colon j\in S\,,\, q\in\cQ\bigr\}\,.
\end{equation}

\item The controlled transition probabilities is are dictated by $q\in\cQ$.
Note then that $\cQ$ may be viewed as the set of stationary policies
with action spaces $\{\cQ(i),\,i\in S\}$.
It is clear that in this space there is no difference
between randomized and pure policies.

\item The running \textit{reward} is
$\Tilde{c}(i,q,u)$ defined in \eqref{E-runreward}.
\end{itemize}
\end{definition}

\medskip

With $\{\widetilde{X}_n\}_{n\in\NN_0}$ denoting the chain defined above,
and $\widetilde\Exp^v_i$ the expectation operator under
the policy $v\in\Usm$ with $\widetilde{X}_0=i\in S$, define
\begin{equation}\label{E-hPhi}
\widehat\Phi(q,v)\,\df\,
\max_{i\in S}\, \lim_{N\to\infty}\,
\frac{1}{N}\, \widetilde\Exp^v_i
\Biggl[\sum_{k=0}^{N-1} \Tilde{c}_v(\widetilde{X}_n,q)\Biggr]\,,
\end{equation}
with $q\in \cQ$.
The preceding analysis shows that we seek to maximize
$\widehat\Phi(q,v)$ with respect to $q\in \cQ$ and minimize it
with respect to $v\in\Usm$.
This forms a single controller zero-sum ergodic game
between the agent who chooses $q$ to maximize the long-term average value
of the reward $\Tilde{c}(i,q,u)$
and the agent who chooses $u$ to minimize it.
The reason that it is  a single controller game is
because the decisions of the second player affect only the payoff and not
the transition probability.
This facilitates the application of \cite{Vrieze} to derive equivalent
linear programs, which we do in \cref{S3}.
It is clear that
\begin{equation}\label{E-hPhi2}
\widehat\Phi(q,v) \,=\, \max_{\pi\in\cM_q}\,\sum_{i\in S}\pi(i)\,\Tilde{c}_v(i,q) \,.
\end{equation}
Suppose we can show that
\begin{equation*}
\min_{v\in\Usm}\,\max_{q\in\cQ}\,\widehat\Phi(q,v)
\end{equation*}
is attained at some $v^*\in\Upm$.
Then, in view of \cref{E-min,variational}, and the fact
that
\begin{equation*}
\Tilde{c}_v (i,q) \,=\,
c_v(i) - D\bigl(q(\cdot\,|\,i) \bigm\| p_v( \cdot \,|\, i)\bigr)
\quad\forall\,v\in\Upm\,,
\end{equation*}
we obtain
\begin{equation}\label{E-minmax0}
\Bar\lambda^* \,=\, \min_{v\in\Usm}\,\max_{q\in\cQ}\,\widehat\Phi(q,v)\,.
\end{equation}

In fact, in \cref{S3} we show that the game has a value $\widehat\Phi^*$,
that is,
\begin{equation}\label{E-value}
\widehat\Phi^* \,=\, \adjustlimits\inf_{v\in\Usm\,}\sup_{q\in\cQ}\,\widehat\Phi(q,v) \,=\,
\adjustlimits\sup_{q\in\cQ\,}\inf_{v\in\Usm}\,\widehat\Phi(q,v)\,,
\end{equation}
and there exists $v^*\in\Upm$ and $q^*\in \cQ$ such that
\begin{equation}\label{E-saddle}
\widehat\Phi^* \,=\, \inf_{v\in\Usm}\,\widehat\Phi(q^*, v)
\,=\, \sup_{q\in\cQ}\,\widehat\Phi(q, v^*) \,.
\end{equation}
In other words, the pair $(q^*,v^*)$ is optimal.

\section{Equivalent linear programs}\label{S3}

We now adapt  the key results of \cite{Vrieze} relevant for us.
Since \cite{Vrieze} works with finite state and action spaces and $\cQ$ is not finite,
we first replace $\cQ$ by a finite approximation $\cQ_n$  for  $n \geq 1$, of
transition probability kernels $q( \cdot \,|\, \cdot )$ such that 
for all $i,j \in S$,
$q(j\,|\,i)$ takes values in the set of dyadic rationals of the form $\frac{k}{2^n}$
for some $0 \leq k \leq 2^n$.
Let $A_n(i)$ be the corresponding action spaces defined as in \cref{E-Ai},
but with $\cQ$ replaced by $\cQ_n$.
As noted in \cref{D2.2}, $\cQ_n$ may be viewed as the set of stationary policies
with action spaces $\{A_n(i),\,i\in S\}$.
For $(q,v)\in\cQ_n\times\Usm$, we let
\begin{equation}\label{E-Phi}
\Phi_i(q,v)\,\df\, \lim_{N\to\infty}\, \frac{1}{N}\, \widetilde\Exp^v_i
\Biggl[\sum_{k=0}^{N-1} \Tilde{c}_v(\widetilde{X}_n,q)\Biggr]\,,
\quad i\in S\,.
\end{equation}
We consider the corresponding single controller zero-sum
game analogous to the one described in \cref{S2}.
As we show later, the  single controller zero-sum
game with the objective in \eqref{E-Phi} over
$(q,v)\in \cQ_n\times\Usm$ has the following equivalent linear programming
formulation.

\smallskip
\noindent \textbf{Primal~program \cref{LPn}:} The primal variables are
\begin{equation*}
V\,=\,(V_1,\dotsc,V_s)\in\RR^s\,,\quad \beta\,=\,(\beta_1,\dotsc,\beta_s)\in\RR^s\,,
\end{equation*}
and
$$y \,=\, (y_1,\dotsc,y_s)\colon \Act\to \PA(S)\,,$$
and the linear program is the following:
\begin{equation}\label{LPn}
\begin{aligned}
&\text{Minimize\ } \sum_{i\in S} \beta_i \text{\ subject to:}\\
&\beta_i \,\ge\, \sum_{j\in S} q_{ij}\,\beta_j\, \ \ \ \ \forall \ i \in S,\\
&V_i \,\ge\, \sum_{u\in \Act} \Tilde{c}(i,q,u)\,y_i(u) - \beta_i  +
\sum_{j\in S} q_{ij}V_j  \  \forall \ i \in S.
\end{aligned}\tag{$\mathsf{LP}_n$}
\end{equation}

\smallskip
\noindent \textbf{Dual program \cref{LPn'}:} The dual variables are
$$\bigl(\mu(i,q),\,\nu(i,q)\colon (i,q)\in S\times A_n(i)\bigr)\,,$$
and $w=(w_1,\dotsc,w_s)\in\RR^s$, and the dual linear program is:
\begin{equation}\label{LPn'}
\begin{aligned}
&\text{Maximize\ } \sum_{i\in S} w_i \text{\ subject to:}\\
&\sum_{(i,q)\in S\times A_n(i)}\bigl(\delta_{ij}-\Tilde{p}(j\,|\,i,q)\bigr)\mu(i,q)
\,=\, 0\ \ \forall j\in S, \\
&\sum_{(i,q)\in S\times A_n(i)}\bigl(\delta_{ij}-\Tilde{p}(j\,|\,i,q)\bigr)\nu(i,q) \\
&\mspace{120mu}+ \sum_{q\in A_n(j)}\mu(j,q) \,=\,    1
\quad\forall\,j\in S\,,  \\
&\sum_{q\in A_n(i)}
\Tilde{c}(i,u,q)\mu(i,q) \,\ge\, w_i \quad \forall\,(i,u)\in S\times \Act\,, \\
&\mu(i,q),\, \nu(i,q) \,\ge\, 0 \quad\forall\,(i,q)\in S\times A_n(i)\,. 
\end{aligned}\tag{$\mathsf{LP}'_n$}
\end{equation}
In the above constraints, $\delta_{ij}=1$ if $i=j$, and equals $0$ otherwise.

The programs in \cref{LPn,LPn'} are exactly as given in \cite[Section~2]{Vrieze},
with the notation adapted to the current setting.
Arguing as in \cite[Lemma~2.1]{Vrieze},
we deduce that both linear programs are feasible and have bounded
solutions.
We note that $\Tilde{c}$ is extended-valued here, whereas it is
and real-valued and bounded in \cite{Vrieze}.
Nevertheless, note that $q'\in A_n(i)$ can always be selected so that
$\Tilde{c}(i,q',u)>-\infty$, and this shows that
the solution $\beta$ is bounded.

\smallskip
\begin{definition}
Let $(V^n,\beta^n,y^n), (\mu^n,\nu^n,w^n)$ denote solutions
for \cref{LPn,LPn'}, resp., for each $n\in\NN$.
Define
\begin{equation*}
\overline\alpha^n_i\,\df\,\sum_{q\in A_n(i)}\mu^n(i,q)\,,
\end{equation*}
and
\begin{equation}\label{E-alphan}
\alpha^n_i(q) \,\df\,
\begin{cases}
\frac{\mu^n(i,q)}{\overline\alpha^n_i}&\text{if\ } \
\overline\alpha^n_i\ne0\\[5pt]
\frac{\nu^n(i,q)}{\sum_{q\in A_n(i)}\nu^n(i,q)}&\text{otherwise.}
\end{cases}
\end{equation}
\end{definition}

\medskip
The following lemma follows from the results in \cite{Vrieze},
some of them drawn from \cite{Bewley,Partha,Vrieze0}.

\smallskip
\begin{lemma}\label{L3.1}
The  single controller zero-sum
game with the objective in \eqref{E-Phi} over
$(q,v)\in \cQ_n\times\Usm$ has a value
$$\Phi^{(n)} = \bigl(\Phi^{(n)}_1, \dotsc , \Phi^{(n)}_s\bigr)\in\RR^s\,,$$ that is,
\begin{equation}\label{E-game-n}
\begin{aligned}
\Phi^{(n)}_i &\,=\, \adjustlimits\inf_{v\in\Usm\,}\sup_{q\in\cQ_n}\,\Phi_i(q,v)\\
&\,=\, \adjustlimits\sup_{q\in\cQ_n\,}\inf_{v\in\Usm}\,\Phi_i(q,v)\,,
\end{aligned}
\end{equation}
and the following hold:
\begin{itemize}
\item[(a)]
We have $\beta^n=\Phi^{(n)}$, where $\beta^n$ is the solution of $\cref{LPn}$.
\item[(b)]
A pair of optimal stationary policies $(q^*_n, v^*_n)\in\cQ_n\times\Upm$
exists.
\item[(c)] The inner supremum (resp., infimum) in the left (resp., right) hand side
of \eqref{E-game-n} is attained at a stationary (nonrandomized) policy.
\item[(d)]
For any solution $(V^n,\beta^n,y^n)$ of \cref{LPn},
$y^n$ is an optimal policy for player 2.
In other words,
$v^*_n (\cdot\,|\, i) = y^n_i(\cdot)$ for all $i\in S$.
Moreover $y^n$ can be selected so as to induce a pure Markov policy.
\item[(e)]
For any solution $(\mu^n,\nu^n,w^n)$ of \cref{LPn'}, $q^*_n$ can be
selected as
\begin{equation*}
q^*_n(\cdot\,|\,i) \,=\, \sum_{q\in A_n(i)} q(\cdot\,|\,i)\alpha^n_i(q)\,,
\end{equation*}
with $\alpha^n_i$ as defined in \cref{E-alphan}.
\end{itemize}
\end{lemma}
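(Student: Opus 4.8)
The plan is to recognize the game with objective \eqref{E-Phi} as a finite single-controller zero-sum average-reward stochastic game and then to invoke the linear-programming theory of \cite{Vrieze} essentially verbatim, the only point requiring care being that $\Tilde{c}$ is extended-real-valued. Concretely, the state space $S$ and the action spaces $A_n(i)\times\Act$ are finite, the transition kernel is governed solely by the $q$-player (player~1, the maximizer), and player~2 (the $v$- or $u$-player, the minimizer) enters only through the running reward $\Tilde{c}(i,q,u)$ of \eqref{E-runreward}. This is exactly the single-controller structure for which \cite{Vrieze} establishes existence of a value together with a primal--dual LP characterization, so \eqref{E-game-n} and parts (a)--(e) will follow once the game is placed in that framework.

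First I would dispose of the extended-valued reward, the sole departure from the hypotheses of \cite{Vrieze}. On the dual side this is self-correcting: the constraints $\sum_{q\in A_n(i)}\Tilde{c}(i,u,q)\,\mu(i,q)\ge w_i$ force $\mu(i,q)=0$ for every action $q$ with $\Tilde{c}(i,q,u)=-\infty$ for some $u$, so any feasible occupation measure is supported on the admissible actions $\{q\colon q(\cdot\smid i)\ll p(\cdot\smid i,u)\ \forall u\}$, on which $\Tilde{c}$ is finite. For feasibility and boundedness I would use the fact noted above that for each $i$ an action $q'\in A_n(i)$ with $\Tilde{c}(i,q',u)>-\infty$ exists; equivalently, one may truncate $\Tilde{c}$ at $-M$, apply \cite{Vrieze} to the resulting bounded game, and observe that the value and optimal policies stabilize for large $M$ because the maximizer never benefits from an action producing $-\infty$ (the value being finite by \eqref{variational}). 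Either route renders \cite{Vrieze} directly applicable.

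With this in hand, parts (a)--(e) are read off from the two programs. Existence of the value vector $\Phi^{(n)}$ and the minimax identity \eqref{E-game-n} is the single-controller average-reward theorem of \cite{Vrieze}, drawing on \cite{Bewley} for existence of the limiting value and on \cite{Partha,Vrieze0} for optimal stationary strategies. Part (a) is the LP--value correspondence of \cite{Vrieze}: at optimality the primal gain variable satisfies $\beta^n=\Phi^{(n)}$, with $V^n$ in the role of the bias. Parts (c) and (d) come from the primal program: the minimizer's optimal strategy is the primal variable $y^n$, and the finite-action single-controller structure lets $y^n$ be taken at an extreme point, hence pure, yielding an optimal $v^*_n\in\Upm$ with $v^*_n(\cdot\smid i)=y^n_i(\cdot)$; part (b) is then existence of the optimal stationary pair. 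Part (e) is the reconstruction of the maximizer's policy from the dual occupation measures via \eqref{E-alphan}: on the recurrent states ($\overline\alpha^n_i\ne0$) one normalizes $\mu^n$, and on the transient states ($\overline\alpha^n_i=0$) one normalizes $\nu^n$; the two coupled flow-balance constraints of \cref{LPn'} (the homogeneous one governing $\mu$ and the inhomogeneous one coupling $\nu$ to $\mu$) guarantee that $q^*_n(\cdot\smid i)=\sum_{q\in A_n(i)}q(\cdot\smid i)\,\alpha^n_i(q)$ is a bona fide stochastic policy attaining the value.

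I expect the main obstacle to be part (e): verifying that this dual-based construction genuinely produces an optimal stationary policy. This is the average-reward analogue of Kallenberg's occupation-measure LP specialized to a single controller, and it requires showing that $(\mu^n,\nu^n)$ correctly separate the state space into recurrent classes (carried by $\mu^n$) and the transient states feeding into them (carried by $\nu^n$), and that the recombined policy $q^*_n$ realizes the value on each recurrent class. The extended-valued reward, once handled as above, is a minor matter by comparison.
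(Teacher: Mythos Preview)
Your approach is essentially the paper's: both reduce to citing the single-controller LP theory of \cite{Vrieze} (with \cite{Bewley,Partha,Vrieze0} in the background) once the extended-valued reward is handled, and your treatment of the latter is if anything more detailed than the paper's one-line remark. The only point the paper makes explicit that you gloss over is that in \cite{Vrieze} it is player~1 who does \emph{not} control the transitions, so one must swap labels via $V(v,q)\df-\Phi(q,v)$ before the citation applies; with that bookkeeping, parts (a)--(e) are read off from \cite[Theorem~2.15, Lemma~1.2, Lemma~2.14, and Lemmas~2.2 and 2.9]{Vrieze} exactly as you outline.
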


\begin{proof}
The proof is based on the results in \cite{Vrieze}.
However, the roles of the players
should be interchanged, since it is player 1 that does not influence the transition probabilities
in \cite{Vrieze}.  But if we define the expected average payoff $V$ as
$$V(v,q) \,=\, -\Phi(q,v)\,,$$
then with $v\in\Usm$ the stationary strategies of player~1, and
$q\in\cQ_n$ those of player~2,  the model matches exactly that
of \cite{Vrieze}.

That the game has a value and parts (a) and (b)
then follow from \cite[Theorem~2.15]{Vrieze}.
Part (c) is the statement of \cite[Lemma~1.2]{Vrieze}.
Part (d) then follows by considering the second constraint in \cref{LPn} together
with \cite[Lemma~2.14]{Vrieze}.
Part (e) follows from the definitions (2.4)-(2.10) following the proof of
\cite[Lemma~2.2]{Vrieze}
together with \cite[Lemma~2.9]{Vrieze}.
This completes the proof.
\end{proof}

\subsection{The semi-infinite linear programs}

Letting $n\nearrow\infty$, we obtain a pair of semi-infinite linear programs
with $\cQ_n$ replaced by $\cQ$ in
\eqref{LPn}--\eqref{LPn'}, that is, linear programs with finitely many
variables, but infinitely many constraints.
These are as follows:

\smallskip
\noindent \textbf{Primal~program \cref{LP}:} The primal variables are
as in \cref{LPn},
and the program is the following:
\begin{equation}\label{LP}
\begin{aligned}
&\text{Minimize\ } \sum_{i\in S} \beta_i \text{\ subject to:}\\
&\beta_i \,\ge\, \sum_{j\in S} \Tilde{p}(j\,|\,i,q')\beta_j\,,\\
&V_i \,\ge\, \sum_{u\in \Act} \Tilde{c}(i, q', u)\,y_i(u) - \beta_i
+\sum_{j\in S} \Tilde{p}(j\,|\,i,q')V_j\,,\\
&\mspace{50mu}\quad\forall\,q'\in \cQ(i)\,,\ \forall\,i\in S\,.
\end{aligned}\tag{$\mathsf{LP}$}
\end{equation}

\smallskip

\noindent \textbf{Dual program \cref{LP'}:} The dual variables are
$$\bigl(\mu(i,q),\,\nu(i,q)\colon (i,q)\in S\times \cQ(i)\bigr)\,,$$
and $w=(w_1,\dotsc,w_s)\in\RR^s$, and the dual linear program is:
\begin{equation}\label{LP'}
\begin{aligned}
&\text{Maximize\ } \sum_{i\in S} w_i \text{\ subject to:}\\
&\sum_{(i,q)\in S\times \cQ(i)}\bigl(\delta_{ij}-\Tilde{p}(j\,|\,i,q)\bigr)\mu(i,q)
\,=\, 0\quad\forall\,j\in S\,, \\
&\sum_{(i,q)\in S\times \cQ(i)}\bigl(\delta_{ij}-\Tilde{p}(j\,|\,i,q)\bigr)\nu(i,q) \\
&\mspace{120mu}+ \sum_{q\in A(j)}\mu(j,q) \,=\,    1
\quad\forall\,j\in S\,,  \\
&\sum_{q\in \cQ(i)}
\Tilde{c}(i,u,q)\mu(i,q) \,\ge\, w_i \quad \forall\,(i,u)\in S\times \Act\,, \\
&\mu(i,q),\, \nu(i,q) \,\ge\, 0 \quad\forall\,(i,q)\in S\times \cQ(i)\,. 
\end{aligned}\tag{$\mathsf{LP}'$}
\end{equation}

With an eye on the passage from the approximate linear programs
\cref{LPn,LPn'}
on $\cQ_n$ to the analogous semi-infinite linear programs
\cref{LP,LP'} over $\cQ$,
we need the following two lemmas.

\smallskip
\begin{lemma}\label{L3.2}
The sequence $\{\beta^n\}_{n\in\NN}$ converges monotonically to
some $\widehat\beta\in\RR^s$ in each component.
Moreover, $\widehat\beta$ is the infimum of all feasible values of \cref{LP}.
\end{lemma}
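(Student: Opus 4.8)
The plan is to exploit the nested structure of the dyadic action sets $\cQ_n$ together with the game-value identity $\beta^n=\Phi^{(n)}$ from \cref{L3.1}(a), and then to pass the constraints of \cref{LPn} to the limit. First I would record that the grids are nested, $\cQ_n\subseteq\cQ_{n+1}$, since every dyadic rational $k/2^n$ is also of the form $k'/2^{n+1}$; consequently $A_n(i)\subseteq A_{n+1}(i)$ for each $i$. By \cref{L3.1}(a) and the game-value characterization \cref{E-game-n},
$$\beta^n_i\,=\,\Phi^{(n)}_i\,=\,\adjustlimits\sup_{q\in\cQ_n\,}\inf_{v\in\Usm}\,\Phi_i(q,v)\,,$$
and since the maximizing player ranges over a larger set as $n$ increases, each component $\beta^n_i$ is nondecreasing in $n$. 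For an upper bound, note that $\Tilde{c}(i,q,u)\le c(i,u)$ because the Kullback–Leibler divergence is nonnegative; hence $\Tilde{c}_v(i,q)\le c_v(i)\le\bar c$ with $\bar c\df\max_{i,u}c(i,u)$, so by \cref{E-Phi} we get $\Phi_i(q,v)\le\bar c$ and therefore $\beta^n_i\le\bar c$ for every $i$ and $n$. A nondecreasing sequence bounded above converges, yielding $\widehat\beta\in\RR^s$ with $\beta^n\nearrow\widehat\beta$ componentwise.

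Next I would show that $\widehat\beta$ is a lower bound for every feasible point of \cref{LP}. The constraints of \cref{LP} are indexed by all $q'\in\cQ(i)$, while those of \cref{LPn} only by $q'\in A_n(i)\subseteq\cQ(i)$; hence any $(V,\beta,y)$ feasible for \cref{LP} is also feasible for \cref{LPn}. Since $\beta^n$ is the value of the game over $\cQ_n\times\Usm$, it is the componentwise-minimal feasible $\beta$ for \cref{LPn} (a standard property of these average-reward programs, cf.\ \cite{Vrieze}), so $\beta\ge\beta^n$ in each component; letting $n\to\infty$ gives $\beta\ge\widehat\beta$. In particular $\sum_i\beta_i\ge\sum_i\widehat\beta_i$ for every feasible $\beta$, so the infimum of the feasible values of \cref{LP} is at least $\sum_i\widehat\beta_i$.

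It remains to exhibit a feasible point of \cref{LP} with $\beta=\widehat\beta$. The first family of constraints passes to the limit directly: given $q'\in\cQ(i)$, choose dyadic $q'_n\in A_n(i)$ with $q'_n\to q'$ (respecting the support pattern, so the forbidden entries stay $0$); the inequality $\beta^n_i\ge\sum_j (q'_n)_{ij}\beta^n_j$ holds for each $n$, and letting $n\to\infty$ yields $\widehat\beta_i\ge\sum_j\Tilde{p}(j\,|\,i,q')\widehat\beta_j$. For the second family I would extract limits of the auxiliary variables: the policies $y^n$ lie in the compact set $\prod_i\PA(\Act)$, so along a subsequence $y^n\to\widehat y$; the value vectors $V^n$ are determined only up to an additive constant, so I recenter them by subtracting $\min_i V^n_i$ and pass the recentered constraint $V^n_i\ge\sum_u\Tilde{c}(i,q'_n,u)y^n_i(u)-\beta^n_i+\sum_j (q'_n)_{ij}V^n_j$ to the limit. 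Since $c$ does not depend on $q$ and $q\mapsto -D\bigl(q(\cdot\,|\,i)\,\|\,p(\cdot\,|\,i,u)\bigr)$ is continuous on $\{q\ll p\}$, while the maximizing player never selects a $q$ with $\Tilde{c}=-\infty$, the reward term converges, producing a feasible triple $(V^\infty,\widehat\beta,\widehat y)$ for \cref{LP} with objective $\sum_i\widehat\beta_i$. Combined with the previous paragraph, this identifies $\widehat\beta$ as the infimum of all feasible values.

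The \emph{main obstacle} is the uniform boundedness of the recentered vectors $V^n$, the delicate point in all average-reward limit arguments, compounded here by the absence of irreducibility and by $\Tilde{c}$ being extended-valued. I would overcome it by exploiting that the communication structure of every $q\in\cQ$ is governed by the single, $n$-independent support pattern $\{(i,j)\colon\max_u p(j\,|\,i,u)>0\}$ from \cref{D2.1}; this furnishes a uniform mixing estimate under which the span of the relative value function is bounded by a multiple of the oscillation of the finite rewards along optimal play, uniformly in $n$. Such a uniform span bound, which can equally be read off from the relative-value estimates in \cite{Vrieze}, supplies the compactness needed to extract the convergent subsequence of $V^n$ used above.
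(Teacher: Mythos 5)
Your proof is correct in substance and follows the same broad strategy as the paper's (monotone convergence of the approximate values, then domination of every feasible $\beta$ by the limit), but it diverges on several points worth noting. First, the direction of monotonicity: you argue, via the nesting $\cQ_n\subseteq\cQ_{n+1}$ and the characterization \cref{E-game-n} together with \cref{L3.1}\,(a), that $\beta^n=\Phi^{(n)}$ is \emph{nondecreasing} and bounded above by $\max_{i,u}c(i,u)$ (since the Kullback--Leibler term in \cref{E-runreward} is nonnegative); the paper instead asserts that a solution of \cref{LPn} is feasible for $\mathsf{LP}_{n+1}$, concludes that $\beta^n$ is \emph{nonincreasing}, and supplies a lower bound. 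Since $A_{n+1}(i)\supseteq A_n(i)$ means $\mathsf{LP}_{n+1}$ has more constraints, not fewer, your direction is the consistent one and your upper bound is the one actually needed; the lemma as stated (``converges monotonically'') is unaffected. Second, for the inequality $\beta\ge\widehat\beta$ you use the direct observation that any point feasible for \cref{LP} is automatically feasible for \cref{LPn}, combined with componentwise minimality of the LP value among feasible $\beta$'s, which is cleaner than the paper's perturbation-and-contradiction argument. Third, you go beyond the paper's proof by also constructing a feasible point of \cref{LP} attaining $\widehat\beta$ (needed to conclude that $\widehat\beta$ is the \emph{infimum} and not merely a lower bound); the paper defers exactly this to \cref{L3.3}, where the boundedness issue for $V^n$ that you flag is resolved not by a mixing or span estimate but by the explicit choice $V^n=(I-P_n+Q_n)^{-1}(I-Q_n)\Tilde{c}_n$ and passage to the limit. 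Your closing appeal to a ``uniform mixing estimate'' for the recentered $V^n$ is the only genuinely soft spot---in the multichain setting without irreducibility such a uniform span bound is not automatic---so if you keep the attainment step inside this lemma you should either adopt the paper's explicit formula for $V^n$ or make that uniform bound precise.
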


\begin{proof}
Since any solution $(V^n,\beta^n,y^n)$ of \cref{LPn} is feasible for the program
$\mathsf{LP}_{n+1}$,
it is clear that $\beta^n$ is nonincreasing
in $n$ in each component.
It also follows by the definition in \cref{E-runreward} that there
exists a constant $M$ such that
\begin{equation*}
\adjustlimits\min_{(i,u)\in S\times\Act\,}\max_{q\in A_n(i)}\,
\Tilde{c}(i,q,u) \,\ge\, M\,.
\end{equation*}
Thus, since $\beta^n_i$ is clearly bounded below by $M$ for each $i\in S$, and $n\in\NN$,
there exists a limit
\begin{equation*}
\widehat\beta\,\df\, \lim_{n\nearrow\infty}\, \beta^n\,.
\end{equation*}
Now, it is straightforward to show that any feasible solution $\beta$ of
\cref{LP} satisfies $\beta\ge\widehat\beta$.
Indeed, if some $\beta$ with $\beta_i<\widehat\beta_i$ is feasible for \cref{LP},
one can find a $\Tilde\beta$ arbitrarily close to $\beta$ which is
feasible for \cref{LPn} for large enough $n$ by continuity.
This of course contradicts the fact that $\beta^n\ge\widehat\beta$ for all $n\in\NN$,
and completes the proof.
\end{proof}

\medskip

Let $P^n$ denote the transition matrix induced by \cref{LPn'} via
the optimal policy defined in \cref{L3.1}\,(d).
Note that this satisfies $\beta^n = P_n\beta^n$ for all $n\in\NN$ by \cref{LPn}
(see \cite[Lemma~2.9]{Vrieze}).
Recall also that $y^n$ is pure Markov, and can be identified with
$v^*_n$ as asserted in \cref{L3.1}\,(d).
We continue with the following lemma.

\smallskip
\begin{lemma}\label{L3.3}
Any limit point $(\widehat\beta,\widehat{P},\Hat{y})$ of
$(\beta^n,P_n,y^n)$ along a subsequence,
as $n\to\infty$ is feasible for \cref{LP}.
\end{lemma}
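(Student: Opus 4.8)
The plan is to pass to the limit in the constraints of \cref{LPn} along a subsequence on which every ingredient converges, exploiting the density of the dyadic rows $A_n(i)$ in the continuum $\cQ(i)$ together with the continuity of the data. Since each $(V^n,\beta^n,y^n)$ solves \cref{LPn}, and $\beta^n\to\widehat\beta$ monotonically by \cref{L3.2}, while $P_n$ and $y^n$ range over the compact product spaces $\prod_{i\in S}\PA(S)$ and $\prod_{i\in S}\PA(\Act)$, I first extract a subsequence (not relabeled) along which $P_n\to\widehat P$ and $y^n\to\Hat y$. I also fix the additive degree of freedom in $V^n$ — the constraints of \cref{LPn} involve $V$ only through $V_i-\sum_j q_{ij}V_j$, hence are invariant under $V^n\mapsto V^n+c\,\1$ — by normalizing $V^n$ at a reference state in each recurrent class of $P_n$, and pass to a further subsequence on which $V^n\to\widehat V$.

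For the first family of constraints of \cref{LP}, fix $i\in S$ and a row $q'\in\cQ(i)$. By density of the dyadic rows, choose $q'_n\in A_n(i)$ with $\Tilde{p}(\cdot\,|\,i,q'_n)\to\Tilde{p}(\cdot\,|\,i,q')$ componentwise. Feasibility of $\beta^n$ for \cref{LPn} gives $\beta^n_i\ge\sum_{j\in S}\Tilde{p}(j\,|\,i,q'_n)\,\beta^n_j$; letting $n\to\infty$ and using $\beta^n\to\widehat\beta$ together with the joint continuity of the linear right-hand side yields $\widehat\beta_i\ge\sum_{j\in S}\Tilde{p}(j\,|\,i,q')\,\widehat\beta_j$, which is exactly the first constraint of \cref{LP}.

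For the second family, I repeat the approximation with the same $q'_n\to q'$, starting from the feasibility inequality
\begin{equation*}
V^n_i \,\ge\, \sum_{u\in\Act}\Tilde{c}(i,q'_n,u)\,y^n_i(u)-\beta^n_i+\sum_{j\in S}\Tilde{p}(j\,|\,i,q'_n)\,V^n_j\,.
\end{equation*}
The only delicate term is $\Tilde{c}$, which is extended-valued. If $\Tilde{c}(i,q',u)=-\infty$ for some $u$ in the support of $\Hat y_i$, the target right-hand side is $-\infty$ and the constraint is vacuous; otherwise $q'(\cdot\,|\,i)\ll p(\cdot\,|\,i,u)$ for all such $u$, and by selecting the dyadic approximants $q'_n$ within the face of the simplex supported on $\{j\colon p(j\,|\,i,u)>0\}$ and invoking the continuity of $t\mapsto t\log t$ on $[0,1]$, the Kullback--Leibler term converges, so $\Tilde{c}(i,q'_n,u)\to\Tilde{c}(i,q',u)$. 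Combining this with $y^n_i\to\Hat y_i$, $\beta^n\to\widehat\beta$, and $V^n\to\widehat V$, and passing to the limit, yields $\widehat V_i\ge\sum_{u\in\Act}\Tilde{c}(i,q',u)\,\Hat y_i(u)-\widehat\beta_i+\sum_{j\in S}\Tilde{p}(j\,|\,i,q')\,\widehat V_j$, i.e.\ the second constraint of \cref{LP}. Hence $(\widehat V,\widehat\beta,\Hat y)$ is feasible, which is the assertion.

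I expect the main obstacle to be the uniform boundedness of the normalized $\{V^n\}$, needed to guarantee a convergent subsequence $V^n\to\widehat V$ and thus a genuine $V$ certifying feasibility. Since $S$ is finite, along a further subsequence the zero/non-zero pattern of $P_n$ may be taken constant, so the recurrent/transient class structure stabilizes and $\widehat P$ inherits it; optimality of $(V^n,\beta^n,y^n)$ makes the relevant constraints of \cref{LPn} tight for $q^*_n$, giving the Poisson relation $(I-P_n)V^n=g^n-\beta^n$ with $g^n_i=\Tilde{c}_{y^n}(i,q^*_n)$ finite and bounded uniformly in $n$ (below by the constant $M$ of \cref{L3.2} and above by $\Tilde{c}(i,q,u)\le c(i,u)$, as the divergence is nonnegative). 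Convergence of the associated deviation matrices then bounds the normalized $V^n$ and produces the limit $\widehat V$ solving $(I-\widehat P)\widehat V=\Hat g-\widehat\beta$, where $\Hat g_i=\Tilde{c}_{\Hat y}(i,\Hat q)$ and $\Hat q$ is the kernel of $\widehat P$; the consistency $\widehat\beta=\widehat P\widehat\beta$, obtained as the limit of $\beta^n=P_n\beta^n$, guarantees solvability. This ergodic/bias estimate, rather than the routine continuity arguments, is where the single-controller structure is essential.
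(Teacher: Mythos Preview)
Your argument is correct and follows essentially the same route as the paper. The paper makes the convergence of $V^n$ explicit by \emph{choosing} $V^n=(I-P_n+Q_n)^{-1}(I-Q_n)\Tilde c_n$ with $Q_n$ the Ces\`aro limit of $P_n$, which is exactly the deviation-matrix solution of the Poisson equation you invoke; it then fixes $q\in\cQ_m$ and lets $n\to\infty$ before appealing to density of $\cup_m\cQ_m$, in place of your diagonal choice $q'_n\to q'$. One small slip: you only have the single global shift $V^n\mapsto V^n+c\,\1$ available (as you yourself note), so ``normalizing at a reference state in each recurrent class of $P_n$'' is more freedom than the constraints actually grant; the explicit bias formula sidesteps this and directly yields a bounded, convergent choice of $V^n$.
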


\begin{proof}
Let $Q_n$ be defined by
\begin{equation*}
Q_n\,\df\, \lim_{N\to\infty}\,\frac{1}{N} \sum_{k=0}^{N-1} P_n^k\,,
\end{equation*}
and similarly define $\widehat{Q}$ relative to the stochastic matrix $\widehat{P}$.
Also let $\Tilde{c}_n$ denote the running cost under $P_n$ and $y^n$.
It is clear that $\Tilde{c}_n$ converges to some $\Hat{c}$ as $n\to\infty$
along the same subsequence.
Since $(\beta^n,P_n,y^n)$ is optimal for \cref{LPn}, we have (in vector notation)
$\beta^n = Q_n \Tilde{c}_n$,
and $\beta^n = P_n\beta^n$ for all $n\in\NN$ by \cref{LPn}
(see \cite[Lemma~2.9]{Vrieze}).
Thus taking limits as $n\to\infty$, we obtain
\begin{equation*}
\widehat\beta\,=\, \widehat{Q}\,\Hat{c}\,,\quad\text{and\ }
\widehat\beta\,=\, \widehat{P}\widehat\beta\,.
\end{equation*}
Note that $V^n$ can be selected as
\begin{equation*}
V^n \,=\, \bigl(I - P_n + Q_n\bigr)^{-1}
\bigl(I - Q_n\bigr) \Tilde{c}_n\,.
\end{equation*}
Taking limits as $n\to\infty$, it follows that $V^n\to\widehat{V}$
which satisfies
\begin{equation*}
\widehat{V} \,\df\, \bigl(I - \widehat{P} + \widehat{Q}\bigr)^{-1}
\bigl(I - \widehat{Q}\bigr) \Hat{c}\,.
\end{equation*}
Inserting the dependence of $\Tilde{c}_n$ and $P_n$ explicitly in the
notation, the second constraint in \cref{LPn} can be written as
\begin{equation}\label{PL3.3F}
V^n + \beta^n \,\ge\, \Tilde{c}_n(q) + P(q) V^n
\end{equation}
for all $q\in\cQ_n$.  Now, fix some $m\in\NN$ and
$q\in\cQ_m$.
Taking limits in \cref{PL3.3F} as $n\to\infty$, we obtain
\begin{equation}\label{PL3.3G}
\widehat{V} + \widehat\beta \,\ge\, \Hat{c}(q) + P(q) \widehat{V}.
\end{equation}
Since $q\in\cQ_m$ is arbitrary and $\cup_{m\in\NN} \cQ_m$ is dense in $\cQ$,
it follows that \cref{PL3.3G} holds for all $q\in\cQ$.
Hence the second constraint in \cref{LP} is satisfied.
Similarly, starting from
\begin{equation*}
\beta^n \,\ge\, P(q) \beta^n \quad\forall q\in\cQ_n\,,
\end{equation*}
and repeating the same argument, we see that the first constraint in
\cref{LP} is also satisfied.
This completes the proof of the lemma.
\end{proof}

\smallskip
\begin{remark}
It is also possible to start from a solution
$(\mu^n,\nu^n,w^n)$ of the dual program \cref{LPn'}, and then take limits as $n\to\infty$.
Note that $\mu^n$ is a Dirac mass, so convergence to (say) $\widehat\mu$
is interpreted in the weak sense. Same for $\nu^n\to\widehat\nu$.
It is easy to see then that any subsequential limit 
$(\widehat\mu,\widehat\nu,\widehat{w})$ satisfies \cref{LP'} by continuity.
\end{remark}

\smallskip
By \cref{L3.2,L3.3}, the linear programs in
\cref{LP,LP'} are feasible and have bounded solutions.
This allows us to extend \cref{L3.1} as follows.

\smallskip
\begin{theorem}\label{T3.1}
The  single controller zero-sum
game with the objective in \eqref{E-Phi} over
$(q,v)\in \cQ\times\Usm$ has a value
$\Phi^* = \bigl(\Phi^*_1, \dotsc , \Phi^*_s\bigr)\in\RR^s$, that is,
\begin{equation*}
\Phi^*_i \,=\, \adjustlimits\inf_{v\in\Usm\,}\sup_{q\in\cQ}\,\Phi_i(q,v)
\,=\, \adjustlimits\sup_{q\in\cQ\,}\inf_{v\in\Usm}\,\Phi_i(q,v)\,,
\end{equation*}
and the following hold:
\begin{itemize}
\item[(i)]
$\Phi^*=\beta$, the solution to \cref{LP}.
\item[(ii)]
A pair of optimal stationary policies $(q^*, v^*)\in\cQ\times\Upm$
exists.
\item[(iii)] The analogous statements of parts (c)--(e) in \cref{L3.1} hold.
\end{itemize}
\end{theorem}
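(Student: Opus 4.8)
The plan is to obtain \cref{T3.1} by passing to the limit $n\to\infty$ in \cref{L3.1}, using \cref{L3.2,L3.3} (and the Remark following \cref{L3.3} for the dual) to control the limits of the finite approximations. Set $\Phi^*_i\df\widehat\beta_i$, where $\widehat\beta=\lim_n\beta^n$ is the monotone limit furnished by \cref{L3.2}. By \cref{L3.3} some subsequential limit $(\widehat\beta,\widehat{P},\widehat{y})$ of $(\beta^n,P_n,y^n)$ is feasible for \cref{LP}, while \cref{L3.2} shows that every feasible value of \cref{LP} dominates $\widehat\beta$ componentwise; hence $\widehat\beta$ is the componentwise minimal solution $\beta$ of \cref{LP}, which is assertion~(i) once the value is identified with $\widehat\beta$.

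First I would establish the two inequalities that sandwich the value. Since $\cQ_n\subseteq\cQ$, for every $n$ and $i\in S$ we have $\sup_{q\in\cQ}\inf_{v\in\Usm}\Phi_i(q,v)\ge\sup_{q\in\cQ_n}\inf_{v\in\Usm}\Phi_i(q,v)=\Phi^{(n)}_i=\beta^n_i$, and letting $n\to\infty$ gives $\sup_{q\in\cQ}\inf_{v}\Phi_i(q,v)\ge\widehat\beta_i$. For the reverse bound I would use the feasible triple $(\widehat\beta,\widehat{V},\widehat{y})$: iterating the two constraints of \cref{LP} along the chain governed by an arbitrary $q\in\cQ$, as in the usual average-reward verification argument, yields $\Phi_i(q,v^*)\le\widehat\beta_i$ for the pure Markov policy $v^*=\widehat{y}$ and every $q\in\cQ$, so that $\inf_v\sup_{q\in\cQ}\Phi_i(q,v)\le\widehat\beta_i$. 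Combining these with the elementary minimax inequality $\sup_q\inf_v\Phi_i\le\inf_v\sup_q\Phi_i$ forces all four quantities to equal $\widehat\beta_i=\Phi^*_i$; this shows the game over $\cQ\times\Usm$ has value $\Phi^*$ and proves~(i).

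For~(ii) I would take $v^*=\widehat{y}$, which is pure since the $y^n$ are pure Markov and $\Upm$ is finite, so a subsequential limit again lies in $\Upm$; by the previous paragraph $v^*$ attains $\inf_v\sup_{q}\Phi_i$. For the maximizer I would produce $q^*\in\cQ$ either as the limit $\widehat{P}$ of the transition matrices $P_n$ or, following part~(e) of \cref{L3.1}, from a subsequential limit $(\widehat\mu,\widehat\nu,\widehat{w})$ of dual solutions (feasible for \cref{LP'} by the Remark) via the analogue of \cref{E-alphan}; one then checks $\inf_v\Phi_i(q^*,v)\ge\widehat\beta_i$, so that $(q^*,v^*)$ is a saddle point. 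Finally,~(iii) follows by transcribing parts~(c)--(e) of \cref{L3.1} and passing to the limit: attainment at nonrandomized stationary policies survives because the relevant policy sets are finite, and the identifications $v^*=\widehat{y}$ and the formula for $q^*$ are exactly the limiting forms of \cref{L3.1}\,(d)--(e).

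The main obstacle I anticipate is the lower-bound/saddle direction together with the infinite, extended-real-valued nature of the problem: one must secure existence of an optimal $q^*$ as a genuine element of the compact set $\cQ$ and verify $\inf_v\Phi_i(q^*,v)\ge\widehat\beta_i$ despite $\Tilde{c}(i,q,u)$ taking the value $-\infty$ wherever $q(\cdot\,|\,i)$ fails to be absolutely continuous with respect to $p(\cdot\,|\,i,u)$. This requires upper semicontinuity of $q\mapsto\inf_v\Phi_i(q,v)$ on $\cQ$ and the density of $\cup_n\cQ_n$ in $\cQ$ used in \cref{L3.3}, so that the Kullback--Leibler penalty in \cref{E-runreward} cannot jump downward in the limit and the limiting $q^*$ stays absolutely continuous on the support of the relevant invariant measure.
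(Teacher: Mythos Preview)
Your proposal is correct and follows essentially the same route as the paper: both obtain \cref{T3.1} by passing to the limit $n\to\infty$ in the finite approximations of \cref{L3.1}, using \cref{L3.2,L3.3} (and the Remark after \cref{L3.3} for the dual side) to control feasibility and optimality. The paper's own argument is in fact much terser---it simply records that \cref{L3.2,L3.3} yield feasibility and bounded solutions for \cref{LP,LP'} and then states the theorem---so your write-up supplies exactly the limiting and verification details the paper leaves implicit.
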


\smallskip
It is now easy to connect the original game in \cref{E-hPhi} to the game
with the objective in \cref{E-Phi}.
Since the maximum of $\Phi_i(q,v)$ over $i\in S$, is attained in some ergodic
class (communicating class of recurrent states), then in view
of \cref{E-hPhi2}, we have
\begin{equation}\label{E-equal1}
\widehat\Phi(q,v) \,=\, \max_{i\in S}\, \Phi_i(q,v)\qquad
\forall (q,v)\in\cQ\times\Usm\,.
\end{equation}
Thus, by \cref{T3.1}, the game in \cref{E-hPhi} has
the value $\widehat\Phi^*=\max_{i\in S}\, \Phi_i^*$,
and \cref{E-value} holds.
In addition, \cref{E-equal1} implies that the pair
$(q^*, v^*)\in\cQ\times\Upm$ in \cref{T3.1}\,(ii) is optimal
for the game in \cref{E-hPhi}, and thus \cref{E-saddle} holds.

In addition, the fact that $v^*\in\Upm$ as asserted in
\cref{T3.1}\,(ii) implies that \cref{E-minmax0} holds.
So, in summary, the risk-sensitive value $\Bar\lambda^*$ defined in \cref{cost}
satisfies
\begin{equation*}
\begin{aligned}
\Bar\lambda^* &\,=\, \adjustlimits\inf_{v\in\Usm\,}\sup_{q\in\cQ}\,\widehat\Phi(q,v) \,=\,
\adjustlimits\sup_{q\in\cQ\,}\inf_{v\in\Usm}\,\widehat\Phi(q,v)\\
&\,=\, \inf_{v\in\Usm}\,\widehat\Phi(q^*, v)
\,=\, \sup_{q\in\cQ}\,\widehat\Phi(q, v^*) \,.
\end{aligned}
\end{equation*}

%
%

%
%
%
%
%
%
%

\section{Dynamic programming}\label{S4}

It can be seen from the linear program \cref{LP}
that the values $\{\Phi^*_i\colon i\in S\}$ can be calculated
by nested dynamic programming equations (see \cite{Puterman}, pp.\ 442--443).
We simplify the notation and write the stochastic matrix $q$
as $[q_{ij}]$.

We have the following theorem.

\smallskip
\begin{theorem}\label{T4.1}
It holds that
$$\Bar\lambda^* \,=\, \max_{i\in S}\, \lambda^*_i
\,=\, \max_{i\in S}\, \Phi^*_i\,,$$
where $\{\Phi^*_i\}_{i\in S}$ solves, for all $i\in S$,
\begin{align}
\Phi^*_i &\,=\, \max_{q \in \cQ}\,
\sum_{j\in S} q_{ij}\Phi^*_j\,, \label{DP-1}\\
\Phi^*_i + V_i &\,=\, \min_{u\in \Act}\,\max_{q\in B_i}\,
\Biggl[\Tilde{c}(i,q,u)+\sum_{j\in S} q_{ij} V_j  \Biggr]\,, \label{DP-2}
\end{align}
with
$$B_i\,\df\, \Biggl\{q\in\cQ\,\colon
\sum_{j\in S} q_{ij}\Phi^*_j = \Phi^*_i\Biggr\}\,.$$
\end{theorem}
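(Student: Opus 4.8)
The plan is to exhibit an explicit $V\in\RR^s$ for which the pair $(\Phi^*,V)$ satisfies \cref{DP-1,DP-2}, thereby transcribing the optimality of the linear program \cref{LP} into the nested form, exactly as the multichain average-reward optimality equations are read off from the modified LP in \cite{Puterman} (pp.~442--443), but now for the single-controller game. The equalities $\Bar\lambda^*=\max_{i\in S}\lambda^*_i=\max_{i\in S}\Phi^*_i$ are already in hand: the first is the definition \cref{cost}, and the second is the identity $\widehat\Phi^*=\max_{i\in S}\Phi^*_i$ together with $\Bar\lambda^*=\widehat\Phi^*$ established at the end of \cref{S3}. So the substance is the two displayed equations. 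I will use throughout \cref{T3.1}: $\Phi^*=\beta$ solves \cref{LP}; a saddle pair $(q^*,v^*)\in\cQ\times\Upm$ exists with $v^*$ pure; and, from \cref{L3.3}, the invariance $\Phi^*=P(q^*)\Phi^*$ holds while the bias $V$ (the limit $\widehat V$ of \cref{L3.3}) solves the Poisson equation $\Phi^*_i+V_i=\Tilde{c}_{v^*}(i,q^*)+\sum_{j\in S}q^*_{ij}V_j$ for all $i\in S$. I take this $V$ as the auxiliary variable.

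For \cref{DP-1}: the first constraint of \cref{LP}, evaluated at $\beta=\Phi^*$, reads $\Phi^*_i\ge\sum_{j\in S}q_{ij}\Phi^*_j$ for every $q\in\cQ$, hence $\Phi^*_i\ge\max_{q\in\cQ}\sum_{j\in S}q_{ij}\Phi^*_j$. Conversely, the invariance $\Phi^*=P(q^*)\Phi^*$ gives $\Phi^*_i=\sum_{j\in S}q^*_{ij}\Phi^*_j$, so the maximum is attained at $q^*$ and the reverse inequality holds. This proves \cref{DP-1} and shows that $q^*\in B_i$ for every $i\in S$; in particular each $B_i$ is nonempty.

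For \cref{DP-2}, abbreviate $f_i(q,u)\df\Tilde{c}(i,q,u)+\sum_{j\in S}q_{ij}V_j$ and $u^*_i\df v^*(i)$, so that the Poisson equation reads $\Phi^*_i+V_i=f_i(q^*,u^*_i)$ with $q^*\in B_i$. The lower bound is immediate from the LP: the second constraint of \cref{LP} at the pure policy $v^*$ gives $\Phi^*_i+V_i\ge f_i(q,u^*_i)$ for all $q\in\cQ$, whence $\Phi^*_i+V_i\ge\max_{q\in B_i}f_i(q,u^*_i)\ge\min_{u\in\Act}\max_{q\in B_i}f_i(q,u)$. For the upper bound I invoke the pointwise best-response property of the minimizer, namely $f_i(q^*,u^*_i)=\min_{u\in\Act}f_i(q^*,u)$; granting it, $q^*\in B_i$ yields, for every $u\in\Act$, $\Phi^*_i+V_i=f_i(q^*,u^*_i)\le f_i(q^*,u)\le\max_{q\in B_i}f_i(q,u)$, and minimizing over $u$ gives $\Phi^*_i+V_i\le\min_{u\in\Act}\max_{q\in B_i}f_i(q,u)$. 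The two bounds establish \cref{DP-2} and identify $(q^*,u^*_i)$ as a saddle point of the one-stage game on $B_i\times\Act$.

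The crux is the pointwise best-response property $f_i(q^*,u^*_i)=\min_u f_i(q^*,u)$. Since $\sum_{j\in S}q^*_{ij}V_j$ does not depend on $u$, this is equivalent to $v^*(i)$ minimizing the one-stage reward $\Tilde{c}(i,q^*,u)$ at every state, and the single-controller structure makes this plausible: with player~1 frozen at $q^*$, player~2 controls the reward but not the transitions, so its optimal response is myopically greedy. Making this rigorous is the main obstacle and is precisely where the multichain subtleties enter: on recurrent states of $q^*$ it follows from gain optimality, whereas on transient states it must be extracted from the bias/complementary-slackness relations --- concretely, from the complementary slackness between the primal policy $y=v^*$ and the dual constraint $\sum_{q\in\cQ(i)}\Tilde{c}(i,u,q)\mu(i,q)\ge w_i$ of \cref{LP'}. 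The same circle of ideas explains why the second equation is anchored to $B_i$ rather than to all of $\cQ$: the restriction to gain-maximizing rows is what couples \cref{DP-2} to \cref{DP-1} and keeps the nested system consistent, mirroring \cite{Puterman}. Two minor points round out the argument: the inner maximum over $B_i$ is attained because $B_i$ is compact and convex and $q\mapsto f_i(q,u)$ is concave and upper semicontinuous (as $q\mapsto D\bigl(q(\cdot\,|\,i)\,\|\,p(\cdot\,|\,i,u)\bigr)$ is convex and lower semicontinuous), and taking the minimum in \cref{DP-2} over pure actions $u\in\Act$ is legitimate precisely because $v^*\in\Upm$ is pure by \cref{T3.1}\,(ii).
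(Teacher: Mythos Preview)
Your route is genuinely different from the paper's. The paper does not derive \cref{DP-1,DP-2} directly from the saddle-point structure of \cref{LP}; it simply observes that these two equations are the tight form of the constraints in \cref{LP}, notes that existence of a solution then follows from \cref{T3.1}, and for the full argument points back to the finite approximations $\cQ_n$: at each level $n$ the problem is a finite single-controller game, so the multichain average-reward optimality equations of \cite{Puterman} (pp.~442--443) apply verbatim, and one passes to the limit exactly as in \cref{L3.2,L3.3}. In other words, the paper delegates the coupling of the gain and bias equations entirely to \cite{Puterman} at the discrete level and then takes limits, rather than arguing directly from the semi-infinite program.

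Your direct argument is cleaner in intention, but the gap you yourself flag is real and is precisely what the paper's route circumvents. The pointwise best-response identity $f_i(q^*,u^*_i)=\min_{u\in\Act}f_i(q^*,u)$ does \emph{not} follow from $(q^*,v^*)$ being a saddle point for $\widehat\Phi$ alone: the saddle property guarantees only that $v^*$ is gain-optimal against $q^*$, and on states transient under $q^*$ the choice $v^*(i)$ is unconstrained by gain optimality. To close the upper bound in \cref{DP-2} you would need that the particular $V$ you selected (the limit $\widehat V$ of \cref{L3.3}) is the bias of a \emph{bias-optimal} response of player~2 to $q^*$, and then extract the pointwise minimization from complementary slackness with the dual constraint $\sum_{q}\Tilde{c}(i,u,q)\mu(i,q)\ge w_i$ in \cref{LP'}. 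That is essentially the content packaged in \cite{Puterman}, which is why the paper takes the approximation-plus-Puterman route rather than re-deriving it. Your diagnosis of where the missing step lives is correct, but as written the upper bound in \cref{DP-2} is not established.
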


Note that \cref{DP-1,DP-2} simply match the constraints in \cref{LP},
so that existence of a solution to these equations follows from
\cref{T3.1}.
The proof of \cref{T4.1}
again goes through a sequence of finite approximations of $\cA$ so that
the aforementioned results from \cite{Puterman} apply.

Care should be taken when performing
the maximization over $q\in\cQ$ in \eqref{DP-2} explicitly using the
Gibbs variational principle (see Proposition~2.3, \cite{DaiPra}),
since the variables $q$ in \cref{DP-2} are not free 
but depend on the maximization in \cref{DP-1}.
Re-order the solution $\{\Phi^*_i\}$ so that
over a partition $\{\cI_1,\dotsc,\cI_m\}$ of $S$, we have
\begin{equation*}
\Phi^*_i\,=\,\beta^*_\ell\quad\forall\,i\in\cI_\ell
\end{equation*}
and
\begin{equation*}
\beta^*_1\,<\,\beta^*_2\,<\,\dotsb\,<\,\beta^*_m\,.
\end{equation*}
It is clear then that
\begin{equation*}
B_i\,=\, \{q_{ij}\in \cQ\,\colon j\in \cI_1\} \quad\forall\,i\in \cI_1\,,
\end{equation*}
and in general
\begin{equation*}
B_i\,=\, \{q_{ij}\in \cQ\,\colon j\in \cI_k\} \quad\forall\,i\in \cI_k\,.
\end{equation*}
Let
\begin{equation*}
\Hat{p}(j\smid i,u) \,\df\,
\begin{cases}
p(j\smid i,u) &\text{if\ } i,j\in \cI_k\ \text{for\ } k\in\{1,\dotsc,m\}\\
0&\text{otherwise.}
\end{cases}
\end{equation*}
Note that the matrix $[\Hat{p}]$ is block-diagonal.
Thus we can write the maximum in \cref{DP-2} as
\begin{equation*}
q^*(j\,|\,i,u) \,\df\, \frac{\Hat{p}(j\,|\,i,u)\E^{c(i,u) + V_j}}
{\sum_k \Hat{p}(k\,|\,i,u)\E^{c(i,u) + V_k}}\,.
\end{equation*}
Substituting this back into \cref{DP-1,DP-2} along with the change of variables
$\Psi_i = \E^{V_i}$, $\Lambda_i = \E^{\Phi^*_i}$, and $\Lambda^* = \E^{\Bar\lambda^*}$,
we get
\begin{align}
\Lambda^* &\,=\, \max_{i \in S}\,\Lambda_i\,, \label{DP*1} \\
\Lambda_i\Psi_i &\,=\, \min_{u\in \Act}\,
\Biggl(\sum_{j \in S} \Hat{p}(j\,|\,i,u)\E^{c(i,u)}\Psi_j\Biggr), \label{DP*2} \\
\Lambda_i &\,=\, \min_{u \in B^*_i}\,\sum_{j \in S}
\Biggl(\frac{\Hat{p}(j\,|\,i,u)\E^{c(i,u)}\Psi_j}
{\sum_k\Hat{p}(k\,|\,i,u)\E^{c(i,u)}\Psi_k}\Biggr)
\Lambda_j \label{DP*3}
\end{align}
for $i \in S$, where $B^*_i$ is the set of minimizers in \eqref{DP*2}.
As in \cite{Borkar}, the important observation here is the appearance of
a `twisted kernel' for averaging in \eqref{DP*2}\footnote{This also serves as a `correction note' to the derivation of (11)-(12) in \cite{Borkar}. The treatment of dynamic programs in \textit{ibid.} is flawed and should be replaced by the exact counterpart of the above.}

\section{Comments on a counterexample of \texorpdfstring{\cite{CavHern-04}}{}}

We discuss the counterexample in \cite[Example~2.1]{CavHern-04}, which
is for an uncontrolled model.

\smallskip
\begin{example}\label{Ex4.1}
Let
\begin{equation*}
p_{21}=1-\rho,\quad p_{22} =\rho,\quad p_{11} =1,\quad c(2)=1\quad c(1)=0\,,
\end{equation*}
with $\rho\in(0,1)$.
Solving \cref{DP-1,DP-2} for $i=1$, we obtain
\begin{equation*}
\Phi^*_1 =0\,,\quad q_{11} =1\,,\quad V_1=\text{any constant}\,.
\end{equation*}

The equations for $i=2$ are
\begin{align*}
\Phi^*_2 &\,=\, \max_{q \in \cQ}\,
\bigl[ q_{22}\Phi^*_2\bigr]\,, \\
\Phi^*_2 + V_2 &\,=\, \max_{q\in B_2}\,
\Biggl[1-q_{22}\log\frac{q_{22}}{\rho} -
q_{21}\log\frac{q_{21}}{1-\rho}\\
&\mspace{200mu}+ q_{22} V_2 + q_{21} V_1 \Biggr] \,.
\end{align*}
Thus we must have $q_{22}=1$ if $\Phi^*_2\ne0$.
In this case, from \cref{DP-1,DP-2}, we get
\begin{equation*}
\Phi^*_2 = 1+\log \rho\,,\quad q_{22} =1\,,\quad V_2=\text{any constant}\,.
\end{equation*}

If $\log\rho>-1$, then the first hitting time to state
$1$ (from state $2$) does not have an exponential moment,
and $\lambda^*_2= 1+\log \rho$, while of course $\lambda^*_1=0$.

On the other hand if $\log\rho<-1$, then $q_{22}\ne1$, and we get
$\Phi^*_2 = 0$, and $q\equiv q_{22}\in(0,1)$ solves
\begin{equation*}
\log\frac{q}{1-q}-\log\frac{\rho}{1-\rho}+ \frac{1}{1-q}\Bigl((1-2q)V_1
- B(q) \Bigr)\,=\,0\,,
\end{equation*}
with
\begin{equation*}
B(q)\,\df\, 1-q\log\frac{q}{\rho} - (1-q) \frac{1-q}{1-\rho}\,.
\end{equation*}
Also $V_2 = \frac{q V_1 + B(q)}{1-q}$.

Thus, in either case, $\Bar\lambda^* = \max\,\{\Phi^*_1,\Phi^*_2\}$.
\end{example}

\smallskip
However, as noted in \cite[Example~2.1]{CavHern-04},
the multiplicative Poisson equation does not have a solution when $\log\rho>-1$,
because there is no pair of numbers $(h_1,h_2)$ that even solves the inequality
\begin{equation*}
\begin{aligned}
\E\rho \E^{h_2}\,=\,\E^{\lambda^*_2} \E^{h_2}
&\,\ge\, \E^{c(2)} \Bigl[ p_{22} \E^{h_2} + p_{21} \E^{h_1}\Bigr]\\
&\,=\, \E\Bigl[ \rho \E^{h_2} + (1-\rho) \E^{h_1}\Bigr]\,.
\end{aligned}
\end{equation*}

\smallskip
We compare \cref{T4.1} with the results in \cite{CavHern-05}.
As shown in \cite[Theorem~3.5]{CavHern-05}, under a Doeblin hypothesis,
it holds that
\begin{equation}\label{E-inf}
\lambda^*_i\,=\,\inf_{g\in\sG}\, g(i)\,,
\end{equation}
where $\sG$ is the class of functions satisfying
\begin{equation*}
g(i) \,=\, \min_{u\in\Act}\,\Bigl(\max\,\{g(j)\,\colon p(j\smid i,u)>0\}\Bigr)\,,
\end{equation*}
and
\begin{equation*}
\E^{g(i) + h_i} \,\ge\,
\min_{u\in B_g(i)}\, \Biggl[ \E^{c(i,u)} \sum_{j\in S} p(j\smid i,u) \E^{h_j}\Biggr]
\,,
\end{equation*}
where $h=(h_1,\dotsc,h_{s})\in\RR^{s}$ is a vector possibly depending
on $g$, and
\begin{equation*}
B_g(i) \,\df\,\bigl\{ u\in\Act\,\colon g(i)=\max\,\{g(j)\,\colon p(j\smid i,u)>0\}\bigr\}\,.
\end{equation*}
It is important to note that the infimum in \cref{E-inf} might not be realized
in $\sG$.  This is what \cref{Ex4.1} shows in the case $\log\rho>-1$.

\section{Future Directions}

One interesting problem that still remains is to show optimality of stationary or pure policies under very general conditions that do not require irreducibility. Yet another interesting direction is an extension of this paradigm to general state spaces and to continuous time risk-sensitive control.

\vfill\eject

\section*{Acknowledgement}
The work of AA was supported in part by
the National Science Foundation through grant DMS-1715210, and in part
the Army Research Office through grant W911NF-17-1-001,
while the work of VB was supported in part by  J.\ C.\ Bose and S.\ S.\ Bhatnagar Fellowships from the Government of India.

\bigskip


\begin{thebibliography}{99}

\bibitem{Ananth} Anantharam, V., and Borkar, V.\ S.\ (2017)
``A variational formula for risk-sensitive reward'',
\textit{SIAM Journal on Control and Optimization}, 55(2), 961--988.


\bibitem{Bewley} Bewley, T., and Kohlberg, E.\ (1978) ``On stochastic games with
stationary optimal strategies'', \textit{Math.\ Op.\ Research} 3, 104--125.

\bibitem{Borkar} Borkar, V.\ S.\ (2017) ``Linear and dynamic programming approaches
to degenerate risk-sensitive reward processes'',
\textit{Proc.\  IEEE 56th Annual Conference on Decision and Control (CDC)}, 3714--3718. 

\bibitem{DaiPra} Dai Pra, P.; Meneghini, L.\ and Runggaldier, W.\ J.\ (1996)
``Connections between stochastic control and dynamic games'',
\textit{Mathematics of Control, Signals and Systems}  9(4), 303--326.

\bibitem{Flem1} Fleming, W.\ H., and Hern\'andez-Hern\'{a}ndez, D.\ (1996)
``Risk-sensitive control for finite state machines on infinite horizon I'',
\textit{SIAM J.\ Control and Optim.} 35(5), 1790--1810.

\bibitem{Flem2} Fleming, W.\ H., and Hern\'andez-Hern\'{a}ndez, D.\ (1996)
``Risk-sensitive control for finite state machines on infinite horizon II'',
\textit{SIAM J.\ Control and Optim.} 37(4), 1048--1069.

\bibitem{CavHern-04} Cavazos-Cadena, R. and Hern\'andez-Hern\'andez, D. (2004)
``A characterization of exponential functionals in finite  Markov chains'',
\textit{Math. Meth. Oper. Res.} 60(3), 399-414.

\bibitem{CavHern-05} Cavazos-Cadena, R. and Hern\'andez-Hern\'andez, D. (2005)
``A characterization of the optimal risk-sensitive average cost in finite
controlled Markov chains'', \textit{Ann. Appl. Probab.} 15(1A), 175--212.


\bibitem{Partha} Parthasarathy, T., and Raghavan, T.\ E.\ S.\ (1981)
``An order field property for stochastic games when one player controls the transitions'',
\textit{J.\ Opt.\ Theory and Appl.} 33(3), 375--392.

\bibitem{Puterman} Puterman, M.\  L.\ (1994)
\textit{Markov Decision Processes: Discrete Stochastic Dynamic Programming},
John Wiley and Sons, Hoboken, NJ.


\bibitem{Vrieze0} Vrieze, O.\ J.\ (1979)
``Characterization of optimal stationary strategies in undiscounted stochastic games'', 
Report BW102/79, Stichting Math. Centrum, Amsterdam.


\bibitem{Vrieze} Vrieze, O.\ J.\ (1981)
``Linear programming and undiscounted stochastic games in which one player controls
the transitions'', \textit{OR Spektrum} 3, 29--35.

\end{thebibliography}
\end{document}